\def\BibTeX{{\rm B\kern-.05em{\sc i\kern-.025em b}\kern-.08em
    T\kern-.1667em\lower.7ex\hbox{E}\kern-.125emX}}
\newcommand{\reals}{\mathbb{R}}
\newtheorem{theorem}{Theorem}
\newtheorem{assumption}{Assumption}
\DeclareMathOperator{\Range}{Range}
\DeclareMathOperator{\Span}{Span}
\DeclareMathOperator{\Expect}{\mathbb{E}}
\begin{document}

% Used for displaying a sample figure. If possible, figure files should
% be included in EPS format.
%
% If you use the hyperref package, please uncomment the following line
% to display URLs in blue roman font according to Springer's eBook style:
% \renewcommand\UrlFont{\color{blue}\rmfamily}

\title{AdaSub: Stochastic Optimization Using Second-Order Information in Low-Dimensional Subspaces\thanks{This work was supported by the Novo Nordisk Foundation under grant number NNF20OC0061894.}
}

\iftrue %false %%%%%%%%%%%%%%%%%%% remove to show authors
\author{\IEEEauthorblockN{João Victor Galvão da Mata}
\IEEEauthorblockA{\textit{Department of Applied Mathematics and Computer Science} \\
\textit{Technical University of Denmark}\\
jogal@dtu.dk \\
ORCID 0009-0009-8251-6476}
\and
\IEEEauthorblockN{Martin S.~Andersen}
\IEEEauthorblockA{\textit{Department of Applied Mathematics and Computer Science} \\
\textit{Technical University of Denmark}\\
mskan@dtu.dk\\
ORCID 0000-0002-4654-3946}
}
\fi %%%%%%%%%%%%%%%%%%% remove to show authors

\maketitle              % typeset the header of the contribution
\begin{abstract}
We introduce AdaSub, a stochastic optimization algorithm that computes a search direction based on second-order information in a low-dimensional subspace that is defined adaptively based on available current and past information. Compared to first-order methods, second-order methods exhibit better convergence characteristics, but the need to compute the Hessian matrix at each iteration results in excessive computational expenses, making them impractical. To address this issue, our approach enables the management of computational expenses and algorithm efficiency by enabling the selection of the subspace dimension for the search. Our code is freely available on GitHub, and our preliminary numerical results demonstrate that AdaSub surpasses popular stochastic optimizers in terms of time and number of iterations required to reach a given accuracy.
\end{abstract}

\begin{IEEEkeywords}
    Stochastic optimization, subspace optimization, low-dimensional optimization, stochastic quasi-Newton methods.
\end{IEEEkeywords}
\section{Introduction}
The optimization of neural network (NN) models is often a challenging task due to the high-dimensional and non-convex nature of many such problems. First-order optimization methods such as stochastic gradient descent (SGD) and its many variants are widely used as the main optimization methods for training NN models \cite{StochasticSubgradientMethod,AdaGrad,adam}, but they often require extensive hyperparameter tuning to reach an acceptable training loss \cite{AdaHessian} and only rely on noisy first-order information, resulting in slow convergence and poor generalization for ill-conditioned training problems \cite{QnewtonForMachineLearning}.

Hyperparameters, including learning rate, weight decay, and momentum parameters, are essential for the training process and require careful tuning. However, tuning these parameters can be costly in terms of both computational resources and human effort. In addition, a poor choice of optimizer and/or parameters can lead to significant performance degradation, highlighting the limitations of first-order methods \cite{QnewtonForMachineLearning,AdaHessian}.

Optimization methods that incorporate second-order information can more effectively capture and leverage the curvature of the loss function. As a result, they have the potential to find better search directions. However, this improved performance generally comes at the cost of additional computational resources.

This paper presents ``AdaSub,'' a stochastic optimizer that utilizes second-order information in a low-dimensional subspace without requiring the explicit computation of the Hessian matrix. As a result, our method avoids significant increases in the computational cost. Our approach is inspired by recent methods such as AdaHessian \cite{AdaHessian} and SketchySGD \cite{SketchySGD}, which access second-order information through Hessian-vector products. Specifically, given a twice continuously differentiable function $f \colon \reals^n \to \reals$ and vectors $v \in \reals^n$ and $w \in \reals^n$, we have that
\begin{equation}
\label{hessian_vec_prod}
   \left.\frac{d \nabla f(w+tv)}{dt}\right|_{t=0} = \frac{\partial( \nabla f(w)^T v)}{\partial w} = \nabla^2 f(w) v,
\end{equation}
which can be computed without forming $\nabla^2 f(w)$ by leveraing automatic differentiation. SketchySGD uses such Hessian-vector products to compute a diagonal plus low-rank approximation to $\nabla^2 f(w)$ using sketching techniques, and AdaHessian uses similar ideas to compute a diagonal approximation to $\nabla^2 f(w)$. In contrast, we use Hessian-vector products to compute a second-order approximation of the objective function $f$ on a subspace that is defined adaptively based on a current (stochastic) gradient and a small set of past (stochastic) gradients rather than random directions. This paper presents the following contributions:
\begin{enumerate}
\item We propose a new stochastic optimization algorithm that generalizes SGD by incorporating second-order information in a low-dimensional subspace. The dimension of the subspace is a parameter of the algorithm.
\item We provide a basic convergence analysis of our method in the special case where the objective function is strongly convex and smooth.
\item We demonstrate the effectiveness of AdaSub through numerical experiments based on various learning tasks. Our method matches or outperforms state-of-the-art methods in terms of the number of iterations.
\item Our preliminary implementation is available on GitHub\footnote{Available at: \url{https://github.com/Jvictormata/adasub}}.
\end{enumerate}

\section{Adaptive Subspace Minimization}

The basic idea behind our method is to first compute a second-order approximation of a twice continuously differentiable objective function $f \colon \reals^n \to \reals$ on an $m$-dimensional affine set. Given a vector $w \in \reals^n$ and a matrix $V \in \reals^{n \times m}$ with orthonormal columns and with $m \ll n$, the restriction of $f$ to the affine set $w + \Range(V)$ may be parameterized as $h(z) = f(w + Vz)$ where $z \in \reals^m$. Using the chain rule, we can express the second-order Taylor expansion of $h$ around $0$ as
\begin{align}
\notag \tilde h(z) &= h(0) + \nabla h(0)^Tz + \frac12 z^T\nabla^2h(0)z \\ 
&= f(w) + g^TVz + \frac12  z^TV^TH Vz,
\end{align}
where we use the shorthand notation $g = \nabla f(w)$ and $H = \nabla^2 f(w)$. In the special case where $\nabla^2h(z) \succ 0$, the unique minimizer of $\tilde h$ is given by $z^\star = -(V^TH V)^{-1}V^Tg$, which motivates an update of the form 
\begin{equation}
\label{update_rule_eq}
w^+ \gets w + \eta \, Vz^\star = w - \eta \, V (V^THV)^{-1} V^T g,
\end{equation} 
where $\eta >0$ is a step size. This corresponds to a Newton step for the restriction of $f$. The computation of $z^\star$ requires $m$ Hessian-vector products to form $HV$, and hence the cost of computing $z^\star$ grows as $O(mC + m^3)$ if $C$ is the cost of a Hessian-vector product. This is much lower than the cost of computing a Newton step in the full $n$-dimensional space, which is generally too costly when $n$ is huge. 

In the general case where $H$ is not necessarily positive definite, an update of the form 
\begin{equation}
    \label{update_rule_eq_general}
    w \gets w - \eta \, V (V^THV + \alpha I)^{-1} V^T g,
\end{equation} 
may be viewed as a regularized Newton step for the restriction of $f$, i.e., we find $z^\star$ by minimizing $\tilde h(z) + \alpha/2 \|z\|_2^2$. This update is guaranteed to be a descent direction if $V^Tg \neq 0$ and $\alpha$ is chosen such that $V^THV + \alpha I \succ 0$. We note that
\begin{align*}
 (\alpha I + V V^THVV^T)^{-1}& = \\
 V(V^THV + &\alpha I)^{-1}V^T +  \alpha^{-1} (I-VV^T),
\end{align*}
so if $g \in \Range(V)$, which implies that $(I-VV^T)g = 0$, then \eqref{update_rule_eq_general} is equivalent to
\[ w \gets w - \eta \, (\alpha I + V V^THVV^T)^{-1} g, \]
where $V V^THVV^T$ is a low-rank approximation to $H$. Moreover, this update is similar to the update rule used in SketchySGD \cite{SketchySGD}, which is of the form 
\[ w \gets w - \eta \, (\rho I + \hat H)^{-1} g,  \] 
where $\hat H$ is a low-rank approximation to $H$ that is obtained by means of a randomized Nystr{\"o}m approximation.

\subsection{Adaptive Subspace Selection}

We now consider an iteration of the form
\begin{align}\label{e-adasub-basic}
    w_{k+1} = w_k - \eta_k\, V_k(V_k^TH_kV_k + \alpha_k I)^{-1}V_k^Tg_k
\end{align}
for $k = 0,1,2,\ldots$ and where $w_0 \in \reals^n$ is an initial guess, $g_k = \nabla f(w_k)$, $H_k = \nabla^2f(w_k)$, and $V_k \in \reals^{n \times m}$ has orthonormal columns. We will construct $V_k$ such that $g_k \in \Range(V_k)$, and we choose $\alpha_k$ such that $V_k^TH_kV_k + \alpha_k I \succ 0$. This yields a descent direction, whenever $\nabla f(w_k) \neq 0$, since
\begin{align*} \left. \frac{d}{d\eta} f(w_k - \eta V_k(V_k^TH_kV_k + \alpha_k I)^{-1}V_k^Tg_k)\right|_{\eta=0} \\=  - \|V_k^Tg_k\|_{(V_k^TH_kV_k + \alpha_k I)^{-1}}^2 < 0, \end{align*} 
where $\|x\|^2_W = x^T W x$ denotes the weighted norm of $x$ induced by the symmetric positive definite matrix $W$ of order $n$. For example, $V_k$ can be constructed such that
\begin{equation*}
    \Range(V_k) \supseteq \Span(\nabla f(w_{k}),\nabla f(w_{k-1}), \dots, \nabla f(w_{k-m+1})),
\end{equation*}
and $\alpha_k$ can be chosen as $\alpha_k = \max(0, \rho - \lambda_{\min}(V_k^TH_kV) )$ where $\rho > 0$ is a given threshold, which implies that $\lambda_{\min}(V_k^TH_kV_k + \alpha_k I) \geq \rho$.

\subsection{Block-Diagonal Approximation for Deep Neural Networks}\label{ss-blockdiag-approx}
For deep NNs, the computation of Hessian-vector products can be quite expensive, and to reduce this cost, we may consider an approximation in which the Hessian is computed layer for layer. This corresponds to a block-diagonal approximation to the Hessian. 

Given a NN with $p$ layers, we partition the parameter vector $w_k$ as $w_k = (w_{k,1},\ldots,w_{k,p})$ such that $w_{k,l}$ contains all the parameters associated with layer $l$. Similarly, if we let $f \colon \reals^n \to \reals$ denote the loss function and partition $\nabla f(w_k)$ as $g_k = (g_{k,1},\ldots,g_{k,p})$ where $g_{k,l}$ is the gradient of $f(w_k)$ with respect to $w_{k,l}$, then
\begin{equation*}
    H_{k,l} v_l = \frac{\partial( g_{k,l}^T v_l)}{\partial w_{k,l}}
\end{equation*}
is the product of the Hessian associated with layer $l$ and a vector $v_l$. The resulting approximation to $H_k$ can be expressed as
\begin{equation*}
    \hat{H}_k = \begin{bmatrix}
    H_{k,1} & 0 & \dots & 0  \\
    0 & H_{k,2} & \dots & 0 \\
    \vdots & \vdots & \ddots & \vdots \\
    0 & 0 & \dots & H_{k,p} \\
  \end{bmatrix},
\end{equation*}
and a matrix-vector product $\hat H_k v$ can be computed efficiently using automatic differentiation. Algorithm~1 outlines a variant of the AdaSub method, which we will analyze in a stochastic setting in the next section.
\begin{algorithm}[h]
\caption{AdaSub with block-diagonal Hessian approx.}\label{alg:cap}
\begin{algorithmic}
\Require subspace dimension $m$, learning rate $\eta>0$, threshold $\rho>0$
\State $k \gets 0$
\Repeat
\For{$l = 1$ to $p$} 

    \If{$k = 0$}
        \State $G_l \gets g_{k,l}$
    \Else
        \State $G_l \gets \begin{bmatrix} G_l & g_{k,l} \end{bmatrix}$
        \If{$k \geq m$} 
            \State Delete the first column of $G_l$
        \EndIf
    \EndIf

    \State Compute orthonormal basis $V_{k,l}$ for $\Range(G_l)$
    \State Compute $Y \gets H_{k,l}V_{k,l}$
    \State Form $Y^TV_{k,l}$ 
    \State Compute spectral decomposition $Y^TV_{k,l} = U\Lambda U^T$
    \State $\alpha \gets \max(0,\rho - \min(\Lambda))$
    \State $w_{k+1,l} \gets w_{k,l} - \eta_k \, V_{k,l} U(\Lambda + \alpha I)^{-1}U^TV_{k,l}^T g_{k,l}$
    \EndFor
    \State $k \gets k + 1$
\Until{stopping criteria met}
\end{algorithmic}
\end{algorithm}

\section{Analysis}
We now analyze the iteration \eqref{e-adasub-basic} in a stochastic setting under the assumption that the function $f$ is twice continuously differentiable and of the form
\begin{align}
    f(w) = \frac{1}{N} \sum_{i=1}^N f_i(w) + \phi(w),
\end{align}
where $\phi \colon \reals^n \to \reals$ is a regularization function.
Given two random subsets $S_k$ and $\tilde S_k$ of $\{1,\ldots,N\}$, we define a minibatch gradient and Hessian at iteration $k$ as
\begin{align*}
    g_k &= \frac{1}{|S_k|} \sum_{i=1}^{|S_k|} \nabla f_i(w_k)+\nabla \phi(w_k),\\
     H_k &= \frac{1}{|\tilde S_k|} \sum_{i=1}^{|\tilde S_k|} \nabla^2 f_i(w_k) + \nabla^2 \phi(w_k).
\end{align*}
Furthermore, we will make the following assumptions:
\begin{assumption}\label{strong_convex_assumption}
    There exist positive constants $\mu$ and $L$ such that for all $w \in \reals^n$,
    \begin{equation*}
         \mu I  \preceq \nabla^2 f(w) \preceq  L I,
    \end{equation*} 
    i.e., $f$ is $\mu$-strongly convex and $L$-smooth.
\end{assumption}
\begin{assumption}\label{stochastic_assumption}
    The stochastic gradients are unbiased, i.e., $\Expect[\,g_k\,|\, w_k] = \nabla f(w_k)$,
    the stochastic Hessians satisfy $\mu I \preceq H_k \preceq L I$, and  
    $g_k$ and $H_k$ are independent random variables.
\end{assumption}
\begin{assumption}\label{assumption_bounded_sg}
    The stochastic gradients are bounded in the sense that \cite{stochasticGradientBounded}
    \begin{equation}\label{assumption_extra_eq}
        \Expect[\,\| g_k \|_2^2\,|\, w_k \,] \leq 2 \xi L (f(w_k)-f(w^\star)) + \sigma^2
    \end{equation} 
    where $\sigma^2 = \Expect[\,\| g(w^\star) \|_2^2\,]$ is the gradient noise of $f$ at the optimum and $\xi$ is a positive constant such that $\Expect[\,\| g_k \|_2^2\,] \leq \xi \| \nabla f(w_k) \|_2^2$. The inequality \eqref{assumption_extra_eq} is known as the ``weak growth condition'' \cite{assumptionWeakGrowth} in the special case where $\sigma^2 = 0$.
\end{assumption}

Assumption~\ref{strong_convex_assumption} implies that for all $w$ and $w'$,
\begin{equation}
\label{l_smooth}
f(w') \leq f(w) + \nabla f(w)^T (w'- w) + \frac{L}{2} \| w'- w \|_2^2,
\end{equation} 
and strong convexity implies that
\begin{equation}
\label{pl_ineq}
\| \nabla f(w) \|_2^2 \geq  2 \mu (f(w)-f(w^\star)),
\end{equation} 
which is known as the Polyak--Lojaciewicz (PL) inequality \cite{plInequality}. We note that the independence of $g_k$ and $H_k$ in Assumption~\ref{stochastic_assumption} can be ensured by letting the subsets $S_k$ and $\tilde S_k$ be independent.  

\begin{theorem} \label{thm-upper-bound}
    Let $f \colon \reals^n \to \reals$ be a twice continuously differentiable function, instate Assumptions~\ref{strong_convex_assumption}--\ref{assumption_bounded_sg}, and let $\kappa = L/\mu$. The iteration \eqref{e-adasub-basic} with the step size $\eta_k = \eta \in (0, \kappa^{-3}\xi^{-1}]$ then satisfies
    \begin{align}\label{e-lemma-ineq}
        \Expect[\,f(w_{k})\,]-f(w^\star) \leq \left( 1\!-\! \frac{\eta}{\kappa}  \right)^k (f(w_0)-f(w^\star)) + \frac{\kappa^2\eta}{2\mu} \sigma^2.
    \end{align}
\end{theorem}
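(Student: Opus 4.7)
My plan is to adapt the standard linear-convergence proof for SGD on strongly convex, $L$-smooth objectives to the subspace setting. Let $P_k = V_k M_k^{-1} V_k^T$ where $M_k = V_k^T H_k V_k + \alpha_k I$, so that \eqref{e-adasub-basic} reads $w_{k+1} = w_k - \eta P_k g_k$ and \eqref{l_smooth} immediately yields
\begin{equation*}
f(w_{k+1}) \le f(w_k) - \eta\,\nabla f(w_k)^T P_k g_k + \tfrac{L\eta^2}{2}\|P_k g_k\|_2^2.
\end{equation*}
From here the goal is to take conditional expectations, produce a one-step contraction of the form $\Expect[f(w_{k+1})-f(w^\star)\mid w_k] \le (1-\eta/\kappa)(f(w_k)-f(w^\star)) + c\eta^2\sigma^2$, and then unroll.

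The second step extracts the two deterministic bounds that the subspace structure provides. Assumption~\ref{stochastic_assumption} gives $\mu I \preceq V_k^T H_k V_k \preceq L I$; choosing the threshold $\rho\le\mu$ in the construction forces $\alpha_k=0$, so the eigenvalues of $M_k$ lie in $[\mu,L]$. Combined with $g_k\in\Range(V_k)$, which implies $V_k V_k^T g_k = g_k$ and $\|V_k^T g_k\|_2 = \|g_k\|_2$, this yields
\begin{equation*}
g_k^T P_k g_k \ge \tfrac{1}{L}\|g_k\|_2^2, \qquad \|P_k g_k\|_2 \le \tfrac{1}{\mu}\|g_k\|_2.
\end{equation*}
Taking conditional expectations, applying Assumption~\ref{assumption_bounded_sg} to the quadratic term, and using the PL inequality \eqref{pl_ineq} on $\|\nabla f(w_k)\|_2^2$ then produces
\begin{equation*}
\Expect[f(w_{k+1})-f(w^\star)\mid w_k] \le \left(1-\tfrac{2\eta}{\kappa}+\eta^2\xi\kappa^2\right)(f(w_k)-f(w^\star)) + \tfrac{L\eta^2\sigma^2}{2\mu^2}.
\end{equation*}
The step-size restriction $\eta\le\kappa^{-3}\xi^{-1}$ is precisely what forces $\eta^2\xi\kappa^2 \le \eta/\kappa$, so the contraction factor collapses to $1-\eta/\kappa$. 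Taking total expectations, iterating, and bounding $\sum_{i=0}^{k-1}(1-\eta/\kappa)^i \le \kappa/\eta$ then yields \eqref{e-lemma-ineq}, since $L\eta\kappa/(2\mu^2) = \kappa^2\eta/(2\mu)$.

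The step I expect to be the main obstacle is bounding the first-order term $\Expect[\nabla f(w_k)^T P_k g_k \mid w_k] \ge \|\nabla f(w_k)\|_2^2/L$. The deterministic analogue $g_k^T P_k g_k \ge \|g_k\|_2^2/L$ is immediate, but in the stochastic setting $P_k$ depends on $g_k$ through $V_k$, so one cannot simply pull $\Expect[g_k\mid w_k]=\nabla f(w_k)$ through. My plan is to use $V_k V_k^T g_k = g_k$ and the splitting
\begin{equation*}
\nabla f(w_k)^T P_k g_k = \tfrac{1}{L}\nabla f(w_k)^T g_k + \nabla f(w_k)^T V_k\bigl(M_k^{-1}-\tfrac{1}{L}I\bigr)V_k^T g_k,
\end{equation*}
so that after conditioning on $w_k$ the first piece delivers $\|\nabla f(w_k)\|_2^2/L$ by unbiasedness, while the residual involves the PSD matrix $M_k^{-1}-(1/L)I$ and can be analysed by first conditioning on $g_k$ (which fixes $V_k$) and then exploiting the independence of $H_k$ from $g_k$ postulated in Assumption~\ref{stochastic_assumption}. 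Pinning down that this residual is non-negative in expectation, despite the correlation between $V_k$ and $g_k$, is the delicate point of the argument.
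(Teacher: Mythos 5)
Your route is the same as the paper's: the smoothness descent lemma, the spectral bounds $\mu I \preceq V_k^T H_k V_k \preceq LI$ giving $\|P_kg_k\|_2 \le \|g_k\|_2/\mu$, the weak growth condition on the quadratic term, the PL inequality, the observation that $\eta \le \kappa^{-3}\xi^{-1}$ collapses $1 - 2\eta/\kappa + \eta^2\xi\kappa^2$ to $1-\eta/\kappa$, and the geometric unrolling with $\sum_i (1-\eta/\kappa)^i \le \kappa/\eta$; all of your constants agree with the paper's. The one step you flag as open --- lower-bounding $\Expect[\,\nabla f(w_k)^T P_k g_k \mid w_k\,]$ by $\|\nabla f(w_k)\|_2^2/L$ --- is indeed the crux, and your proposed splitting does not close it: the residual $\nabla f(w_k)^T V_k\bigl(M_k^{-1}-\tfrac{1}{L}I\bigr)V_k^T g_k$ is a \emph{bilinear} form $a^TAb$ with $a = V_k^T\nabla f(w_k)$, $b = V_k^Tg_k$, and $A \succeq 0$, and such a form has no sign in general; conditioning on $g_k$ first and then averaging over $H_k$ does not help, because the sign problem comes from the misalignment of $a$ and $b$, not from the randomness in $H_k$.

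The paper closes this step by a blunter device: it uses the independence postulated in Assumption~\ref{stochastic_assumption} to factor $\Expect[\,\tilde H_k^+ g_k \mid w_k\,] = \Expect[\,\tilde H_k^+\,]\,\Expect[\,g_k\,] = \Expect[\,\tilde H_k^+\,]\,\nabla f(w_k)$, and then bounds the resulting quadratic form below by $\tfrac{1}{L}\|\nabla f(w_k)\|_2^2$ via the spectral bound on $(V_k^TH_kV_k)^{-1}$ (stated there as $\|\Expect[\,\tilde H_k^+\,]\|_2 \ge 1/L$). In effect the paper treats $V_k$ as determined by $w_k$ and past information only, so that $\tilde H_k^+$ is independent of the current $g_k$. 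Your observation that $V_k$ is constructed to contain the current stochastic gradient in its range, so that this factorization is not literally licensed by the independence of $g_k$ and $H_k$ alone, is a legitimate criticism of the paper's argument rather than something your splitting can repair; if you instead adopt the paper's reading and condition on $V_k$, the remainder of your proof goes through verbatim and reproduces \eqref{e-lemma-ineq}. Note also that even under that reading, the final quadratic-form bound requires $\nabla f(w_k) \in \Range(V_k)$ and not merely $g_k \in \Range(V_k)$, a point left implicit both in your proposal and in the paper.
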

\begin{proof}
    From the iteration \eqref{update_rule_eq}, we have that $w_{k+1}-w_k = -\eta \tilde H_k^+ g_k$ where $\tilde H_k^+ = V_k (V_k^TH_kV_k)^{-1}V_k^T$ denotes the Moore--Penrose pseudo inverse of $\tilde H_k = V_k (V_k^TH_kV_k) V_k^T$. Moreover, using the inequality \eqref{l_smooth}, we have
    \begin{align*}
        f(w_{k+1}) &\leq  f(w_{k}) - \eta \, \nabla f(w_{k})^T \tilde H_k^+ g_k  + \frac{L\eta^2}{2} \| \tilde H_k^+ g_k \|_2^2 \\
        &\leq f(w_{k}) - \eta \, \nabla f(w_{k})^T \tilde H_k^+ g_k  + \frac{L\eta^2}{2} \| \tilde H_k^+ \|_2^2 \|g_k \|_2^2.
    \end{align*}
    Assumption~\ref{stochastic_assumption} implies that
    $\| \tilde H_k^+ \|_2 = \|(V_k^TH_kV_k)^{-1}\|_2 \leq 1/\mu$, which leads to
    \begin{align}
       f(w_{k+1}) &\leq f(w_{k}) - \eta \, \nabla f(w_{k})^T \tilde H_k^+ g_k  + \frac{L\eta^2}{2\mu^2} \|g_k \|_2^2.
    \end{align}
    Taking the expectation and using Assumptions~\ref{stochastic_assumption}--\ref{assumption_bounded_sg}, we find that
    \begin{align*}
        \Expect[\,f(w_{k+1})\,] &\leq f(w_{k}) - \eta \, \nabla f(w_k)^T \Expect[\,\tilde H_k^+\,] \Expect[\, g_k\,] \\
        & \qquad  + \frac{L\eta^2}{2\mu^2}  \Expect[\,\|g_k \|_2^2\,]\\
        &\leq f(w_{k}) - \frac{\eta}{L} \, \| \nabla f(w_{k})\|_2^2  + \frac{L\eta^2}{2\mu^2}  \Expect[\,\|g_k \|_2^2\,],
    \end{align*}
    where the last inequality follows from Assumption~\ref{stochastic_assumption} by noting that $\|\Expect[\, \tilde H_k^+ \,] \|_2 \geq 1/L$.
    Next, using the PL inequality \eqref{pl_ineq} and Assumption~\ref{assumption_bounded_sg}, we get
    \begin{align*}
      &  \Expect[\,f(w_{k+1})\,]-f(w^\star) \leq \\
       &\qquad \left(1 - \frac{2\eta\mu}{L} + \frac{L^2\eta^2\xi}{\mu^2} \right) (f(w_k)-f(w^\star)) + \frac{L\eta^2}{2\mu^2} \sigma^2.
    \end{align*}
    Letting $\eta \leq \mu^3/(L^3\xi)$, we see that
        \[ 1 - \frac{2\eta\mu}{L} + \frac{L^2\eta^2\xi}{\mu^2} = 1 - \frac{2\eta \mu}{L}\left(1- \frac{L^3\eta\xi}{2\mu^3}\right) \leq 1-\frac{\eta \mu}{L} = 1-\frac{\eta}{\kappa},\]
    and hence,
    \begin{align*}
        \Expect[\,f(w_{k+1})\,]-f(w^\star) \leq \left(1\!-\!\frac{\eta}{\kappa} \right) (f(w_k)-f(w^\star)) + \frac{\kappa \eta^2}{2\mu} \sigma^2.
    \end{align*}
    Finally, we arrive at \eqref{e-lemma-ineq} by recursively taking the expectation over all steps.
\end{proof}
A direct consequence of Theorem~\ref{thm-upper-bound} is that
$\Expect[\,f(w_{k}) \,] - f(w^\star) \leq \epsilon$
provided that the learning rate $\eta$ is sufficiently small and $k$ is sufficiently large. 

Although analyzing the non-convex case is important, it is a complex problem that requires further investigation. Thus, we will defer the analysis of the non-convex case to future work.

\section{Numerical Experiments}

To assess the practical performance of the AdaSub method, we have conducted four experiments based on different settings, including convex and non-convex problems and different architectures of NNs. In the first experiment, we apply the method to a logistic regression problem using a Python implementation of the method based on the JAX \cite{jax} library. The remaining experiments utilize NNs, for which we implement a PyTorch \cite{pytorch} optimizer that incorporates the block-diagonal Hessian approximation described in \S\ref{ss-blockdiag-approx}. We also use the ReLU activation function, despite its lack of twice continuous differentiability. In all our experiments, we use $\rho = 0.02$ as the lower bound for the eigenvalues of $V_k^T H_k V_k + \alpha_k I$.

We compare our method to widely used methods such as Adam \cite{adam}, AdaGrad \cite{AdaGrad}, and SGD in all experiments. For the experiments involving a NN, we also compare to AdaHessian \cite{AdaHessian}. We note that we were unable to obtain meaningful results with SketchySDG's default parameter choices, and therefore, we defer a fair comparison with SketchySDG to future work.

The learning rates for all the experiments were taken from papers with similar numerical experiments \cite{AdaHessian,SketchySGD} or manually chosen to ensure the best performance of each optimizer and a fair comparison.

\subsection{Logistic Regression}
In our first experiment, we trained a logistic regression model to classify if a given image features a cat. We used a data set with 1200 cat images from Kaggle \cite{cat_faces} and 1200 non-cat images of random objects (such as food, cars, houses, etc) from the "other" category of the Linnaeus 5 dataset \cite{linnaeus5}. Each image has $64 \times 64$ pixels, and we used 80\% of the images for training and 20\% for testing. We used a batch size of 192 images, $\eta = 0.2$, and a subspace of dimension $m=10$. With Adam/AdaGrad/SGD, the learning rates were tuned manually and set to be 0.005/0.001/0.2, and with Adam, we used the parameters $\beta_1 = 0.9$ and $\beta_2 = 0.999$.

The ensure a lower bound for the stochastic Hessian we used a weight decay with regularization parameter $\lambda = 10^{-4}$ in this experiment. Figure~\ref{result_convex_experiment} shows the results from this experiment and demonstrates AdaSub's advantage over the other methods in terms of both training loss and test accuracy.
 \begin{figure*}[htb]
    \centering
\includegraphics[width=\textwidth]{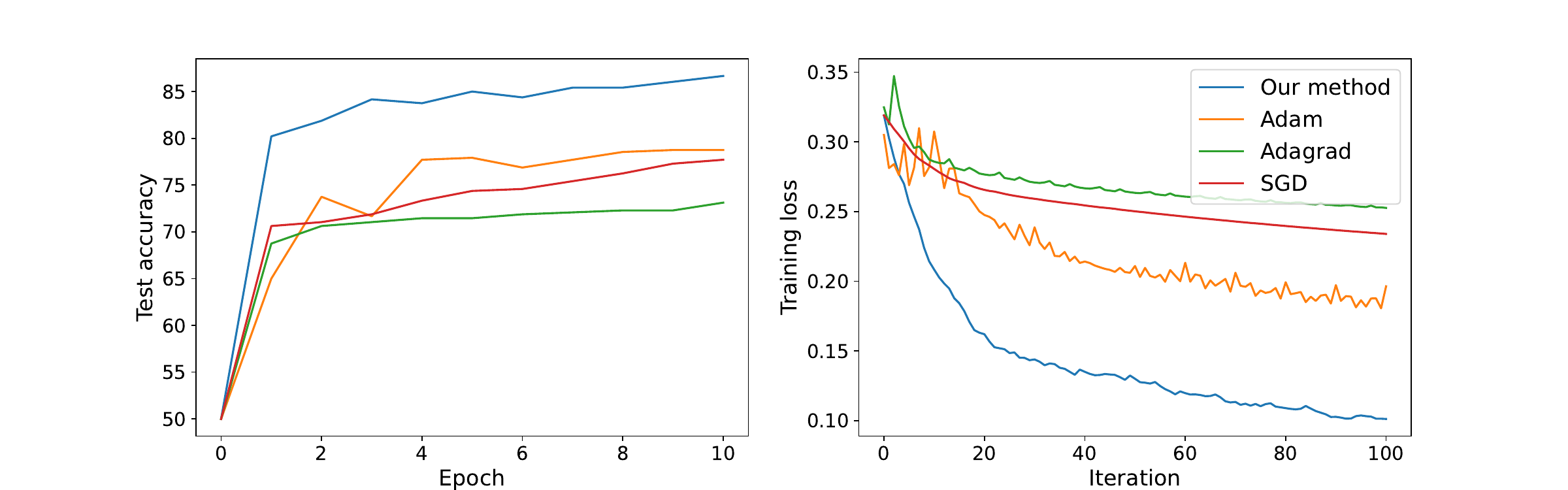}
\caption{Accuracy of the logistic regression model in the test set per epoch and value of the cost function in the training set per training step.} \label{result_convex_experiment}
\end{figure*}

\subsection{Fully Connected Neural Network}
Next, to test our method in a fully connected NN, we built a shallow fully connected NN and trained it on the MNIST \cite{mnist} dataset. This dataset consists of 70,000 grayscale images of handwritten numbers. We used 60,000 images as the training set and the remaining 10,000 images as the test set, and we used a batch size of 1000 images. Each image is represented as a vector of length 784 (corresponding to 28x28 pixels). We used a network with a single hidden layer with 100 units and a ReLU activation function, and as the output layer, we used a softmax activation function with 10 outputs. For AdaSub, we used $\eta= 0.1$ and a subspace of dimension $m=40$. For AdaHessian/Adam/AdaGrad/SGD, the learning rates were tuned manually and set to be 0.05/0.001/0.01/0.1. For AdaHessian/Adam, we used $\beta_1 = 0.9$ and $\beta_2 = 0.999$. We note that this is a nonconvex problem, and hence the analysis from the previous section does not apply. 

Figure~\ref{result_fully_connected} shows, for all the methods, the accuracy in the test set after each epoch and the cost function for the current minibatch at each iteration. AdaSub presented an advantage over the other methods in terms of both training loss and test accuracy.
\begin{figure*}[htbp]
    \centering
\includegraphics[width=\textwidth]{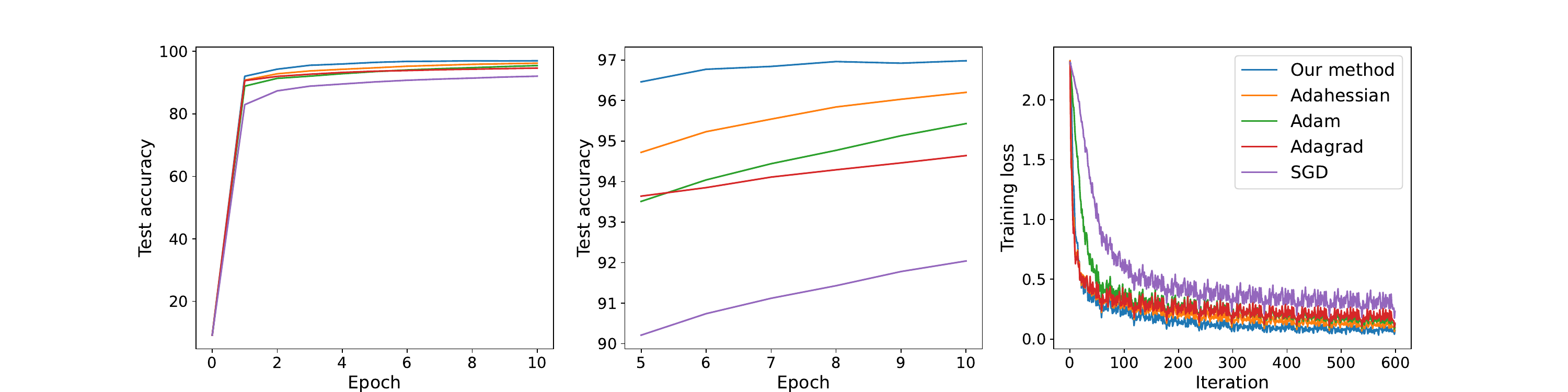}
\caption{Accuracy of the fully connected network in the test set per epoch and value of the cost function in the training set per training step.} \label{result_fully_connected}
\end{figure*}

\subsection{Convolutional Neural Network}
We have also applied our method to the problem of training a convolutional neural network (CNN) based on the CIFAR-10 \cite{cifar10} dataset. This dataset consists of 60,000 32x32 color images divided into 10 different classes. The classes are airplanes, cars, birds, cats, deer, dogs, frogs, horses, ships, and trucks, and each class has the same number of examples. We used 50,000 images as the training set, 10,000 images as the test set, and we used a batch size of 400 images.

The input to the CNN is a color image with dimensions 32x32x3. The network begins with a convolutional layer that has six output channels and uses 5x5x3 dimensional kernels. After the convolutional layer, a 2x2 max pooling layer with a stride of 2 is applied. This process is repeated with a second convolutional layer that has 16 output channels and uses 5x5x6 dimensional kernels, followed by another 2x2 max pooling layer with a stride of 2. The output from this step is a tensor with dimensions of 5x5x16. This tensor is flattened into a vector of length 400, which is then fed into a fully connected network with two hidden layers. The first hidden layer has 120 units, and the second has 84 units. The output layer has 10 units and uses a softmax activation function. All the layers except the output layer use a ReLU activation function.

In this experiment, we used $\eta = 0.15$ and a subspace of dimension $m=20$ in our method. For AdaHessian/Adam/AdaGrad/SGD, the learning rates were tuned and set to be 0.1/0.001/0.01/0.4. For AdaHessian/Adam, we used $\beta_1 = 0.9$ and $\beta_2 = 0.999$. 

Figure~\ref{result_conv} shows, for all the methods, the accuracy in the test set after each epoch and the cost function for the current minibatch at each iteration. Comparing these results with the ones from the previous experiment, we can observe that even for a more complex problem with a smaller adaptive subspace, AdaSub was able to outperform the other methods in terms of both training loss and test accuracy.
\begin{figure*}[htbp]
    \centering
\includegraphics[width=\textwidth]{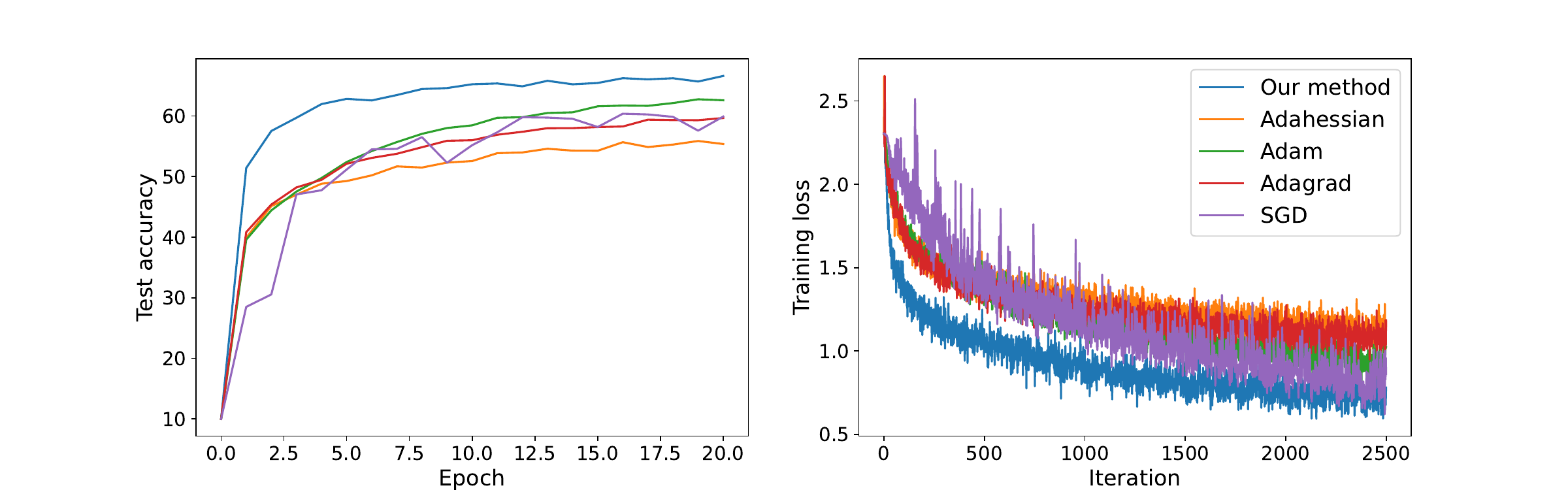}
\caption{Accuracy of the CNN in the test set per epoch and value of the cost function in the training set per training step.} \label{result_conv}
\end{figure*}

\subsection{ResNet20}
To test our method for the training of deeper NNs, we applied it to ResNet20 \cite{resnet} using the CIFAR-10 dataset. We used $\eta = 0.15$ and a batch size of 400 images. Since ResNet20 is a NN with approximately 270,000 parameters, we chose to use a two-dimensional subspace ($m=2$) for this experiment to limit the computational cost. Thus, we only need to perform two (approximate) Hessian-vector products at each iteration. For AdaHessian/Adam/AdaGrad/SGD, the learning rates were tuned and set to 0.2/0.001/0.1/0.1. For AdaHessian/Adam, we used $\beta_1 = 0.9$ and $\beta_2 = 0.999$.

Figure \ref{result_resnet} shows, for all the methods, the accuracy in the test set and the cost function for the current minibatch after each epoch. Comparing these results with the last experiments, we observe that our method still outperforms SDG/Adam/Adagrad and is comparable to AdaHessian despite the small subspace dimension.
\begin{figure*}[htbp]
    \centering
\includegraphics[width=\textwidth]{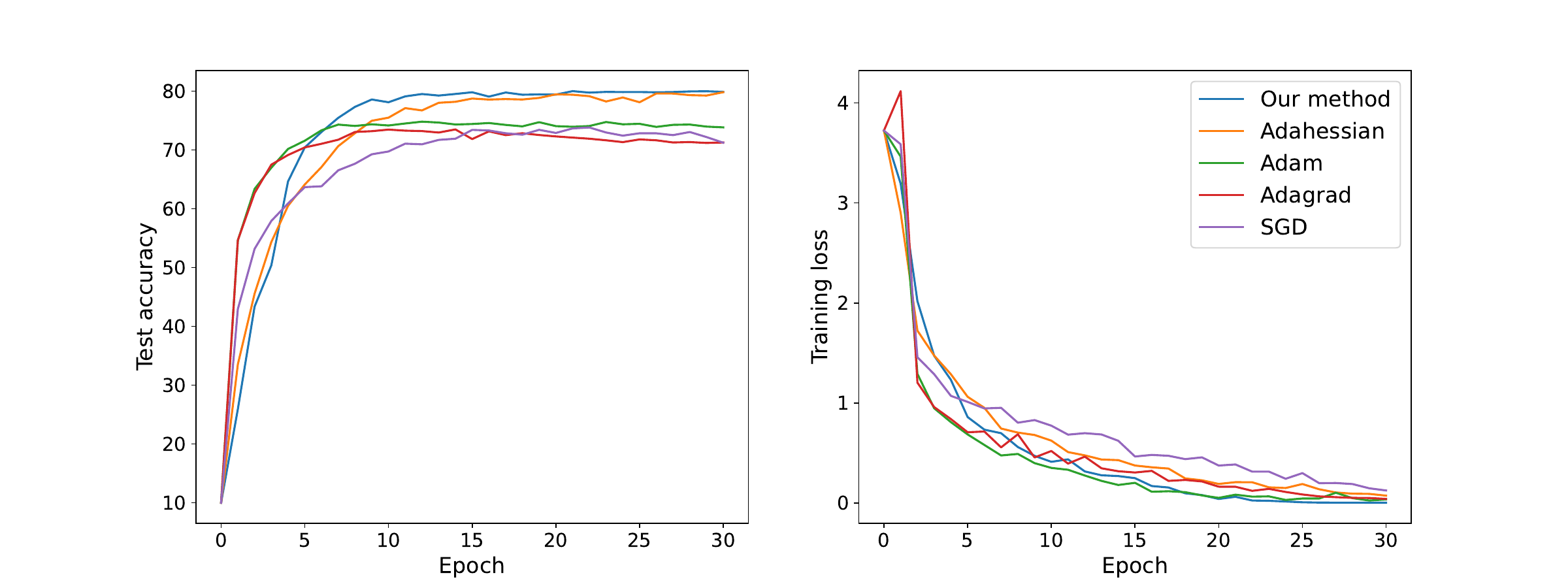}
\caption{Accuracy of ResNet20 in the test set and value of the cost function in the training set per epoch.} \label{result_resnet}
\end{figure*}

\subsection{Time Comparison}
The cost of one  minibatch iteration using AdaSub or AdaHessian is higher than that of Adam, Adagrad, and SGD because of the added cost of Hessian-vector products. Thus, to compare the runtime, we repeated the first experiment (logistic regression), letting each optimizer run for 10 seconds. 

Figure \ref{result_time} shows the test accuracy and the training loss as a function of the runtime. From the test accuracy plot, we can see that the test accuracy obtained after 10 seconds using Adam is matched by AdaSub after approximately 1.4 seconds. Moreover, in terms of training loss, AdaSub outperforms Adam after approximately 1 second.
\begin{figure*}[htb]
    \centering
\includegraphics[width=\textwidth]{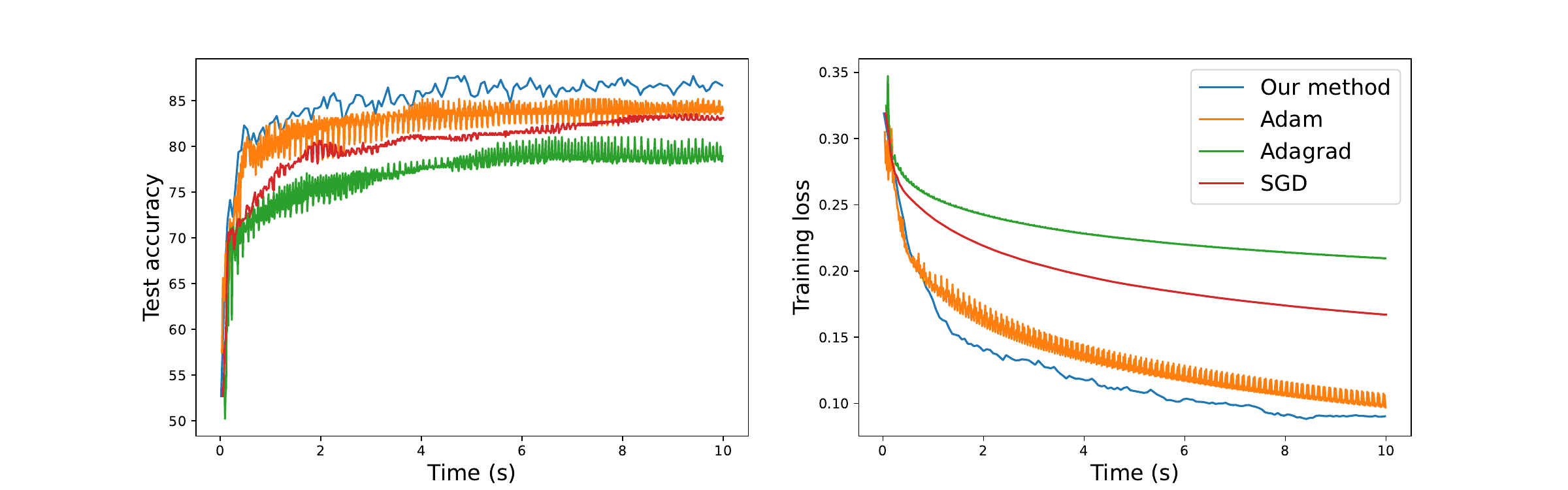}
\caption{Test accuracy and training loss as a function of time.} \label{result_time}
\end{figure*}

\section{Conclusions and Future Work}

We have presented AdaSub, a new stochastic optimization method that restricts the search direction at each iteration to a low-dimensional subspace of the previous gradients. Our method achieves a better balance between accuracy and computational efficiency by incorporating second-order information into the optimization process while limiting the dimension of the subspace. We tested the method on both convex and non-convex problems, and in both settings, it outperformed several state-of-the-art optimization methods. These results suggest that our proposed algorithm can be a promising approach for optimizing complex and high-dimensional models in a range of applications within machine learning. Moreover, our preliminary experiments suggest that AdaSub is also relatively robust to the choice of algorithmic parameters such as the number of past directions and the step size.

The current version of the algorithm is a first attempt at incorporating second-order information into the optimization process without significantly increasing the cost per iteration. The construction of the subspace and the practical implications of its dimension need to be further explored. For example, the construction of $V_k$ may be based on additional information such as previous search directions in addition to previous gradients. Moreover, a more thorough analysis of the convergence rate of the algorithm is needed under less restrictive and more realistic assumptions, and additional numerical experiments are needed to test the performance of the algorithm on a wider range of problems. In particular, we would like to test the algorithm on larger datasets and deeper NNs.  

%
% ---- Bibliography ----
%

\IEEEtriggeratref{15}%15
%\bibliographystyle{plain}
%\bibliography{refs} 

\end{document}